\newcommand\BibTeX{{\rmfamily B\kern-.05em \textsc{i\kern-.025em b}\kern-.08em
T\kern-.1667em\lower.7ex\hbox{E}\kern-.125emX}}
\newcommand{\ds}{\displaystyle}
\newcommand{\df}{\displaystyle\frac}
\newcommand{\om}{\omega}
\begin{document}

\title{Stability and Hopf bifurcation analysis of a four-dimensional hypothalamic-pituitary-adrenal axis model with distributed delays \protect\thanks{This work was supported by a grant of the Romanian National Authority for
Scientific Research and Innovation, CNCS-UEFISCDI, project no. PN-II-RU-TE-2014-4-0270.}}

\author[1,2]{Eva Kaslik}
\author[1]{Mihaela Neam\c{t}u*}

\authormark{EVA KASLIK \& MIHAELA NEAM\c{T}U}

\address[1]{\orgname{West University of Timi\c{s}oara}, \country{Romania}}

\address[2]{\orgname{Institute e-Austria Timi\c{s}oara}, \country{Romania}}

\corres{*Mihaela Neam\c{t}u, \email{mihaela.neamtu@e-uvt.ro}}


\abstract[Abstract]{
A four-dimensional mathematical model of the hypothalamus-pituitary-adrenal (HPA) axis is investigated, incorporating the influence of the GR concentration and general feedback functions. The inclusion of distributed time delays provides a more realistic modeling approach, since the whole past history of the variables is taken into account. The positivity of the solutions and the existence of a positively invariant bounded region are proved. It is shown that the considered four-dimensional system has at least one equilibrium state and a detailed local stability and Hopf bifurcation analysis is given. Numerical results reveal the fact that an appropriate choice of the system's parameters leads to the coexistence of two asymptotically stable equilibria in the non-delayed case. When the total average time delay of the system is large enough, the coexistence of two stable limit cycles is revealed, which successfully model the ultradian rhythm of the HPA axis both in a normal disease-free situation and in a diseased hypocortisolim state, respectively. Numerical simulations reflect the importance of the theoretical results.} 

\keywords{HPA axis, mathematical model, distributed time delay, stability, bistability, bifurcation, limit cycle, numerical simulation}


\maketitle

\section{Introduction}

The hypothalamus-pituitary-adrenal (HPA) axis is a neuroendocrine system which regulates a number of physiological processes \cite{Conrad_2009,kim2016onset}, playing an important role in stress response. It consists of the hypothalamus, pituitary and adrenal glands, as well direct influences and positive and negative feedback interactions. Different types of stressors (e.g. infection, dehydration, anticipation, fear) activate the secretion of corticotropin-releasing hormone (CRH)  in the hypothalamus, which induces the corticotropin (ACTH) production in the pituitary. ACTH travels by the bloodstream to the adrenal cortex, where it activates the release of cortisol (CORT), which in turn down-regulates the production of both CRH and ACTH.

Dynamical systems have previously proved to be successful in studying metabolic and endocrine processes. Different types of mathematical models of the HPA axis have been recently explored. Three dimensional systems of differential equations with or without time delays, with the state variables given by the hormone concentrations CRH, ACTH and CORT, have been used to model the HPA axis in \cite{Jelic_2005,Lenbury_2005,Savic_2006,Bairagi_2008,Pornsawad_2013}. The influence of the circadian rhythm in the mathematical model has been analyzed in \cite{bangsgaard2017}. A more general three-dimensional model has been developed in \cite{Vinther_2011}, possessing a unique equilibrium state. If time delays are not taken into consideration, no oscillatory behavior has been observed \cite{Vinther_2011, Andersen_2013}. Oscillatory solutions should be a feature of mathematical models of the HPA axis, as they correspond to the circadian / ultradian rhythm of hormone levels \cite{Carroll_2007}. A generalization of the "minimal model" \cite{Vinther_2011}  has been obtained in \cite{Kaslik_Neamtu_2016}, including memory terms in the form of distributed delays and fractional-order derivatives, which are shown to generate oscillatory solutions. 

Due to the transportation of the hormones throughout the HPA axis, time delays should mandatorily be incorporated in the considered mathematical models. With the aim of reflecting the whole past history of the variables, general distributed delays are considered, proving to be more realistic and more accurate in real world applications than discrete time delays \cite{Cushing_2013}. Distributed delay models appear in a wide range of applications such as hematopoiesis \cite{Adimy_2006}, population biology \cite{Faria_2008,song2017dd,feng2017dd} or neural networks \cite{Jessop_2010,du2013dd}.

Four-dimensional models which incorporate the positive self-regulation of glucocorticoid receptors (GR) in the pituitary have been investigated in \cite{Gupta_2007,benzvi2009,Sriram_2012,zarzer2013,CMMSE2017}. In particular, in \cite{CMMSE2017} we constructed a four-dimensional general model with distributed time delays, which represents an extension of the minimal model of \cite{Vinther_2011}. In \cite{Gupta_2007}, it has been suggested that positive self-regulation of GR may trigger bistability in the dynamical structure of the HPA model, i.e. there exist two asymptotically stable equilibrium states: one corresponding to the normal disease-free state with higher cortisol levels, and a second one with lower cortisol levels related to a diseased state associated with hypocortisolism. 

In this paper, an in-depth analysis is provided for the distributed-delay model introduced in \cite{CMMSE2017}, proving the positivity of the solutions and the existence of a positively invariant bounded region. It is shown that the considered four-dimensional system has at least one equilibrium state and a local stability and bifurcation analysis is provided. Numerical results reveal the fact that an appropriate choice of the system's parameters leads to the coexistence of two asymptotically stable equilibria in the non-delayed case. Moreover, when the total average time delay is large enough, it is shown that two stable limit cycles coexist, which appear due to Hopf bifurcations, extending the results presented in \cite{Gupta_2007,CMMSE2017}. 

\section{Mathematical model of HPA with distributed delays}

With the aim of formulating a mathematical model of the HPA axis, the following sequence of events is considered. Cognitive and physical stressors stimulate CRH neurons in the paraventricular nucleus
(PVN) of the hypothalamus to trigger the secretion of
corticotropin-releasing hormone (CRH), which is released into the portal blood vessel of the hypophyseal stalk. CRH is transported to the anterior pituitary, where it stimulates the secretion of adrenocorticotropin
hormone (ACTH), with an average time delay $\tau_1$. ACTH then activates a complex signaling cascade in the adrenal
cortex, stimulating the secretion of the stress hormone cortisol (CORT) with the average time delay $\tau_2$. CORT exerts a negative feedback on the hypothalamus and the pituitary, suppressing the synthesis and release of CRH and ACTH, in an effort to return them to the baseline levels. On one hand, cortisol inhibits the secretion of CRH in the hypothalamus \cite{Landsberg_1992}, with an average time delay $\tau_{31}$. On the other hand, CORT binds to glucocorticoid receptors (GR) in the pituitary and performs a negative feedback on the secretion of ACTH, with an average time delay $\tau_{32}$. Moreover, the CORT-GR complex self-upregulates the GR production in the anterior pituitary, with an average time delay $\tau_{34}$ .

Denoting the plasma concentrations of hormones CRH, ACTH and CORT by $x_1(t)$, $x_2(t)$, and $x_3(t)$ respectively, and the availability of the glucocorticoid receptor GR in the anterior pituitary  by $x_4(t)$, the following system of differential equations with general distributed delays is considered:
\begin{equation}\label{sys.hpa.gr.dd}
\left\{\begin{array}{l}
\ds\dot x_1(t)=k_1f_1\left(\int_{-\infty}^t x_3(s)h_{31}(t-s)ds\right)-w_1x_1(t),\\
\ds \dot x_2(t)=k_2f_2\left(x_4(t)\int_{-\infty}^t x_3(s)h_{32}(t-s)ds\right)\int_{-\infty}^tx_1(s)h_1(t-s)ds-w_2x_2(t),\\
\ds \dot  x_3(t)=k_3\int_{-\infty}^tx_2(s)h_2(t-s)ds-w_3x_3(t),\\
\ds \dot x_4(t)=k_4\left(\xi+f_3\left(x_4(t)\int_{-\infty}^t x_3(s)h_{34}(t-s)ds\right)\right)-w_4x_4(t).\\
\end{array}\right.
\end{equation}
Here, the positive constants $k_i$, $i=\overline{1,4}$, relate the production rate of each variable to specific factors that regulate the rate of release/synthesis \cite{kim2016onset}. The basal production rate $\xi$ and elimination constants $w_1,w_2,w_3,w_4$ are positive.


The function $f_1$ represents the negative feedback of CORT on CRH levels in the paraventricular nucleus of the hypothalamus while the function $f_2$ describes the negative feedback of the CORT-GR complex (at concentration $x_3(t)x_4(t)$) in the pituitary. The positive feedback function $f_3$, describes the self-upregulation effect of the CORT-GR complex on GR production in the anterior pituitary. The following general assumptions will be considered: 
\begin{itemize}
\item $f_1,f_2:[0,\infty)\to (0,1]$ are strictly decreasing, smooth and bounded on $[0,\infty)$;
\item $f_3:[0,\infty)\to [0,1)$ is strictly increasing, smooth and bounded on $[0,\infty)$;
\item $f_1(0)=f_2(0)=1$; $f_3(0)=0$.
\end{itemize}
As a special case, the feedback functions can be chosen as Hill functions, such as in \cite{kim2016onset,Vinther_2011,Andersen_2013,Gupta_2007,Sriram_2012}, which verify the conditions given above:
\begin{equation}\label{func.hill.neg}
f_1(u)=1-\eta\df{u^{\alpha_1}}{c_1^{\alpha_1}+u^{\alpha_1}}\quad,\quad 
f_2(u)=1-\mu\df{u^{\alpha_2}}{c_2^{\alpha_2}+u^{\alpha_2}}\quad,\quad f_3(u)=\df{u^{\alpha_3}}{c_3^{\alpha_3}+u^{\alpha_3}}
\end{equation}
with Hill coefficients $\alpha_1,\alpha_2,\alpha_3\geq 1$, $\eta,\mu\in(0,1]$, and microscopic dissociation constants $c_1,c_2,c_3>0$. 

In system (\ref{sys.hpa.gr.dd}), the delay kernels $h_1,h_2,h_{31},h_{32},h_{34}:[0,\infty)\to[0,\infty)$ are probability density functions representing the probability of occurrence of a particular time delay. These functions are bounded, piecewise continuous and satisfy
\begin{equation}\label{delay.kernel.properties}
\int_0^{\infty}h(s)ds=1.
\end{equation}
The average time delay of a kernel $h(t)$ is
$$\tau=\int_0^{\infty}sh(s)ds<\infty.$$
In this paper, we focus our attention on two types of delay kernels:
\begin{itemize}
\item Dirac kernels: $h(s)=\delta(s-\tau)$, where $\tau\geq 0$, equivalent to a discrete time delay:
$$\int_{-\infty}^t x(s)h(t-s)ds=\int_0^\infty x(t-s)\delta(s-\tau)ds=x(t-\tau).$$
\item Gamma kernels: $h(s)=\df{ s^{p-1}e^{-s/\theta}}{\theta^p\Gamma(p)}$, where $p,\theta>0$, with the average delay $\tau=p\theta$.
\end{itemize}
In the mathematical modeling of real world phenomena, the exact distribution of time delays is generally unavailable, and hence, general kernels may provide better results \cite{Campbell_2009,Yuan_2011}. The analysis of models which include particular classes of delay kernels (e.g. weak Gamma kernels with $p=1$ or strong Gamma kernels with $p=2$) may reveal the more realistic effect of distributed delays on the system's dynamics, compared to discrete delays. 

Initial conditions associated with system (\ref{sys.hpa.gr.dd}) are of the form:
$$x_i(s)=\varphi_i(s),\quad \forall\, s\in(-\infty,0], \,\,i=1,2,3,4,$$
where $\varphi_i$  are bounded continuous functions defined on $(-\infty,0]$, with values in $[0,\infty)$.

\section{Positively invariant sets and equilibrium states}

\begin{lemma}\label{lem.1} Assume that $g:[0,\infty)\rightarrow[0,\infty)$ is a continuously differentiable function such that there exist $m_1,m_2>0$  such that $\ds g(0)\leq \frac{m_1}{m_2}$ and
$$g'(t)\leq m_1-m_2g(t),\quad\forall~t\geq 0.$$
Then, $\ds g(t)\leq \frac{m_1}{m_2}$ for any $t\geq 0$.
\end{lemma}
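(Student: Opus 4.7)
The plan is to reduce the differential inequality to one that can be integrated explicitly by means of an integrating factor, which is the standard trick for comparison lemmas of this form. Define the auxiliary function $u(t)=g(t)-m_1/m_2$. By the hypothesis $g(0)\leq m_1/m_2$ we have $u(0)\leq 0$, and from the assumed differential inequality I get
\[
u'(t)=g'(t)\leq m_1-m_2 g(t)=-m_2\,u(t),\qquad t\geq 0,
\]
so $u$ satisfies $u'(t)+m_2 u(t)\leq 0$.

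Next I multiply by the integrating factor $e^{m_2 t}>0$, which preserves the inequality, to rewrite it as $\bigl(e^{m_2 t}u(t)\bigr)'\leq 0$. Hence the map $t\mapsto e^{m_2 t}u(t)$ is non-increasing on $[0,\infty)$, and therefore
\[
e^{m_2 t}u(t)\leq u(0)\leq 0\qquad\text{for all }t\geq 0.
\]
Dividing by the positive quantity $e^{m_2 t}$ yields $u(t)\leq 0$, i.e.\ $g(t)\leq m_1/m_2$, which is exactly the conclusion.

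I do not expect any real obstacle here: the argument is essentially an instance of Gr\"onwall's inequality, and $g$ being $C^1$ is more than enough regularity to justify writing $(e^{m_2 t}u(t))'=e^{m_2 t}(u'(t)+m_2 u(t))$ pointwise and integrating from $0$ to $t$. The only point worth a brief remark in the write-up is that the hypothesis $g(0)\leq m_1/m_2$ is genuinely needed: without it, the conclusion would have to be relaxed to an asymptotic bound of the form $\limsup_{t\to\infty} g(t)\leq m_1/m_2$, or more precisely $g(t)\leq m_1/m_2+(g(0)-m_1/m_2)e^{-m_2 t}$. Everything else is a one-line integration.
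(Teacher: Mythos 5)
Your proof is correct and follows essentially the same route as the paper: your auxiliary function $e^{m_2 t}u(t)$ is exactly the paper's $G(t)=e^{m_2 t}\bigl(g(t)-\tfrac{m_1}{m_2}\bigr)$, and both arguments conclude by observing that this function is non-increasing and evaluating at $t=0$. No gaps.
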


\begin{proof}
From the hypothesis we easily obtain that the function $G(t)=e^{m_2t}\left(g(t)-\frac{m_1}{m_2}\right)$ is decreasing on $[0,\infty)$. 
Therefore, as $G(t)\leq G(0)$ for any $t\geq 0$, it follows that
$$g(t)\leq \frac{m_1}{m_2}+e^{-m_2t}\left(g(0)-\frac{m_1}{m_2}\right)\leq \frac{m_1}{m_2},\quad\forall~t\geq 0.$$
This completes the proof.
\end{proof}

In the following, we denote: 
$$\df{k_1}{w_1}=L_1\quad,\quad \df{k_1k_2}{w_1w_2}=L_2\quad,\quad \df{k_1k_2k_3}{w_1w_2w_3}=L_3\quad,\quad\df{k_4}{w_4}=L_4.$$

\begin{proposition}\label{prop.dissipativity}
The compact set
$$\Omega=\left[0,L_1\right]\times\left[0,L_2\right]\times\left[0,L_3\right]\times\left[0,(\xi+1)L_4\right]\subset \mathbb{R}^4_+$$
and $\mathbb{R}^4_+$ are positively invariant sets for system (\ref{sys.hpa.gr.dd}). 

\end{proposition}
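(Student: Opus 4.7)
The plan is to handle the two invariance claims separately and in a specific order, dealing first with nonnegativity and then with the upper bounds by repeatedly invoking Lemma \ref{lem.1}.

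For positive invariance of $\mathbb{R}^4_+$, I would use the classical sign-of-the-vector-field-on-the-boundary argument. Assume the initial histories $\varphi_i$ take values in $[0,\infty)$. Inspecting each equation of (\ref{sys.hpa.gr.dd}) on the face $x_i=0$: in the $x_1$-equation, $\dot x_1 = k_1 f_1(\cdot) > 0$ because $f_1$ maps into $(0,1]$; in the $x_4$-equation, $\dot x_4 = k_4(\xi + f_3(\cdot)) \geq k_4 \xi > 0$. For $x_2$ and $x_3$, by handling the components in the order $x_1 \to x_2 \to x_3$ one sees that once $x_1(s) \geq 0$ for $s \leq t$, the convolution integral $\int_{-\infty}^t x_1(s) h_1(t-s)\,ds$ is nonnegative, so at $x_2=0$ we have $\dot x_2 = k_2 f_2(\cdot)\cdot(\text{integral}) \geq 0$; the same reasoning applied next gives $\dot x_3 \geq 0$ at $x_3=0$. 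These sign conditions, together with continuity, imply that no trajectory can cross into the negative orthant, giving positive invariance of $\mathbb{R}^4_+$.

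For positive invariance of $\Omega$, assume the initial histories $\varphi_i$ take values in $[0,L_i]$ (respectively $[0,(\xi+1)L_4]$). I would derive the upper bounds \emph{sequentially}, component by component. Using $f_1 \leq 1$, the first equation gives $\dot x_1(t) \leq k_1 - w_1 x_1(t)$, so Lemma \ref{lem.1} (with $m_1=k_1$, $m_2=w_1$) yields $x_1(t) \leq L_1$ for all $t\geq 0$. Combining this with $\varphi_1 \leq L_1$ on $(-\infty,0]$ and the normalization (\ref{delay.kernel.properties}), any convolution $\int_{-\infty}^t x_1(s) h_1(t-s)\,ds$ is $\leq L_1$. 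Then, since $f_2\leq 1$, the second equation gives $\dot x_2(t) \leq k_2 L_1 - w_2 x_2(t)$, and Lemma \ref{lem.1} yields $x_2(t) \leq k_2 L_1/w_2 = L_2$. An analogous step for $x_3$ produces $x_3(t)\leq L_3$. Finally, using $f_3<1$ in the fourth equation gives $\dot x_4(t) \leq k_4(\xi+1) - w_4 x_4(t)$, so Lemma \ref{lem.1} yields $x_4(t)\leq (\xi+1)L_4$.

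The only subtlety, which is not really a serious obstacle, is making sure the convolution integrals inherit the pointwise bound on all of $(-\infty,t]$; this is why one must assume the initial histories already lie in $\Omega$ and why the four bounds have to be established in the order $x_1, x_2, x_3, x_4$ rather than simultaneously. Once that ordering is respected, each step is a direct application of Lemma \ref{lem.1}, and combining with the first part establishes the proposition.
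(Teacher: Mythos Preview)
Your proposal is correct and follows essentially the same two-stage structure as the paper: establish nonnegativity first, then derive the four upper bounds sequentially ($x_1 \to x_2 \to x_3$, and $x_4$) by applying Lemma~\ref{lem.1} to the differential inequalities $\dot x_1 \leq k_1 - w_1 x_1$, $\dot x_2 \leq k_2 L_1 - w_2 x_2$, $\dot x_3 \leq k_3 L_2 - w_3 x_3$, and $\dot x_4 \leq k_4(\xi+1) - w_4 x_4$. Your remark about needing the initial histories to lie in $\Omega$ so that the convolutions inherit the bound is exactly the point, and the paper handles it the same way.

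The only place where your argument differs from the paper's is the nonnegativity step. You use the sign-of-the-vector-field-on-the-boundary criterion, whereas the paper observes directly that $\dot x_i(t) \geq -w_i x_i(t)$ (since the source terms are nonnegative once positivity of the earlier components is in hand), which after multiplying by $e^{w_i t}$ gives $x_i(t) \geq \varphi_i(0)e^{-w_i t} \geq 0$. Both arguments are standard and require the same sequential reasoning you flagged; the paper's version is marginally cleaner because it yields an explicit exponential lower bound and avoids invoking an invariance principle for functional differential equations, but the content is the same.
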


\begin{proof}
Assume that $(x_1(t),x_2(t),x_3(t),x_4(t))$ denotes the solution of system (\ref{sys.hpa.gr.dd}) with the initial condition $x_i(s)=\varphi_i(s)$, $s\in(-\infty,0]$, with $i=\overline{1,4}$, where $\varphi_i$ are bounded positive continuous functions defined on $(-\infty,0]$. From the positivity of the feedback functions it easily follows that
$$\dot{x}_i(t)\geq -w_i x_i(t),\quad\forall~t>0,~i=\overline{1,4}$$
and hence, the functions $x_i(t)e^{w_it}$ are increasing on $(0,\infty)$. Therefore:
$$x_i(t)\geq \varphi_i(0)e^{-w_it}\geq 0,\quad\forall~t>0,~i=\overline{1,4}.$$
Therefore, all positive initial conditions lead to positive solutions, i.e.  $\mathbb{R}^4_+$ is positively invariant for system (\ref{sys.hpa.gr.dd}).

Moreover, assume $(\varphi_1(s),\varphi_2(s),\varphi_3(s),\varphi_4(s))\in\Omega$ for any $s\in(-\infty,0]$.

From the first equation of  (\ref{sys.hpa.gr.dd}) and the boundedness of $f_1$, it follows that 
$$\dot{x}_1(t)\leq k_1-w_1x_1(t),\quad\forall~t>0.$$
Using Lemma \ref{lem.1}, as $x_1(0)\leq L_1$, we have that $x_1(t)\leq L_1$ for any $t\geq 0$. 

The second equation of (\ref{sys.hpa.gr.dd}), the boundedness of $f_2$ and (\ref{delay.kernel.properties}) provides 
$$\dot{x_2}(t)\leq k_2L_1-w_2x_2(t),\quad\forall~t> 0.$$
From Lemma \ref{lem.1} it follows that $x_2(t)\leq L_2$ for any $t\geq 0$.

From the third equation of (\ref{sys.hpa.gr.dd}) and (\ref{delay.kernel.properties}) it follows that 
$$\dot{x_3}(t)\leq k_3L_2-w_3x_3(t),\quad\forall~t\geq 0.$$
Lemma \ref{lem.1} leads to $x_3(t)\leq L_3$ for any $t\geq 0$.

The last equation of (\ref{sys.hpa.gr.dd}), the boundedness of $f_3$ leads to
$$\dot{x_4}(t)\leq k_4(\xi+1)-w_4x_4(t),\quad\forall~t\geq 0,$$
which, based on Lemma \ref{lem.1}, provides the desired conclusion.
\end{proof}

\begin{remark}
Due to the fact that $x_4(t)$ in the mathematical model (\ref{sys.hpa.gr.dd}) is a non-dimensional variable  representing the availability of glucocorticoid receptors  \cite{kim2016onset,Gupta_2007}, it is reasonable to demand that $x_4(t)\in[0,1]$ for any $t\in\mathbb{R}$. Based on Proposition \ref{prop.dissipativity}, this is guaranteed if the following inequality is satisfied: 
$$(\xi+1)L_4\leq 1.$$
\end{remark}

The existence of an equilibrium point of system (\ref{sys.hpa.gr.dd}) is provided by the following:

\begin{proposition}\label{prop.eq.states}
The equilibrium states of system (\ref{sys.hpa.gr.dd}) belong to the invariant set $\Omega$ and are of the form
\begin{equation}\label{eq.state}
E=\left(L_1f_1(x_0), \df{w_3x_0}{k_3},x_0,\df{1}{x_0}f_2^{-1}\left(\df{x_0}{L_3f_1(x_0)}\right)\right).
\end{equation}
where $x_0\in\left[0,L_3\right]$ is a solution of the equation
\begin{equation}\label{eq.states}
L_4\left(\xi+(f_3\circ f_2^{-1})\left(\df{x}{L_3f_1(x)}\right)\right)=\df{1}{x}f_2^{-1}\left(\df{x}{L_3f_1(x)}\right).
\end{equation}
\end{proposition}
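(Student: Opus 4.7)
The plan is to set the right-hand sides of (\ref{sys.hpa.gr.dd}) to zero at an equilibrium $E=(x_1^*,x_2^*,x_3^*,x_4^*)$ and solve the resulting algebraic system. At any equilibrium the state variables are constant in $t$, so by the normalization property (\ref{delay.kernel.properties}) every convolution integral $\int_{-\infty}^t x_j^* h(t-s)\,ds$ collapses to $x_j^*$. The system therefore reduces to
\[
k_1 f_1(x_3^*) = w_1 x_1^*,\quad k_2 f_2(x_3^* x_4^*)\,x_1^* = w_2 x_2^*,\quad k_3 x_2^* = w_3 x_3^*,\quad k_4\bigl(\xi+f_3(x_3^* x_4^*)\bigr) = w_4 x_4^*.
\]

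The first equation gives $x_1^* = L_1 f_1(x_3^*)$ and the third gives $x_2^* = w_3 x_3^*/k_3$. Substituting these into the second equation and using the definition of $L_3$, one obtains $f_2(x_3^* x_4^*) = x_3^*/(L_3 f_1(x_3^*))$. Since $f_2$ is strictly decreasing and smooth on $[0,\infty)$, it is injective and admits a smooth inverse on its image; applying $f_2^{-1}$, together with a division by $x_3^*$ (which must be nonzero at any equilibrium, see below), yields $x_4^* = x_0^{-1} f_2^{-1}\bigl(x_0/(L_3 f_1(x_0))\bigr)$ with the notation $x_0 := x_3^*$. Substituting this expression, together with the induced value $f_3(x_3^* x_4^*) = (f_3\circ f_2^{-1})\bigl(x_0/(L_3 f_1(x_0))\bigr)$, into the fourth equation produces exactly (\ref{eq.states}). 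Proposition \ref{prop.dissipativity} then confines the equilibrium to $\Omega$, so in particular $x_0 \in [0,L_3]$, completing the claimed characterization.

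Two bookkeeping points deserve care, and I expect them to constitute the only real obstacles. First, one must rule out $x_3^*=0$: if $x_3^*=0$, the first equation forces $k_1 f_1(0)=0$, contradicting $k_1>0$ together with $f_1(0)=1$; hence $x_0>0$ and the division by $x_0$ is legitimate. Second, one must check that $x_0/(L_3 f_1(x_0))$ lies in the image of $f_2$, which is where $f_2^{-1}$ is defined --- but this is automatic at a genuine equilibrium, since the identity $f_2(x_3^* x_4^*) = x_0/(L_3 f_1(x_0))$ already exhibits the right-hand side as a value of $f_2$. Apart from these verifications, the argument is a direct algebraic manipulation, since no delay structure survives at equilibrium; the possibly multiple solutions $x_0$ of the scalar equation (\ref{eq.states}) will be the mechanism producing the bistability feature announced in the introduction.
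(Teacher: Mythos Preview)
Your derivation of the form \eqref{eq.state} and the scalar equation \eqref{eq.states} is correct and follows the same algebraic route as the paper: reduce the distributed-delay system at a constant state via \eqref{delay.kernel.properties}, then eliminate $x_1^*,x_2^*,x_4^*$ in favour of $x_0=x_3^*$. Your care with $x_3^*>0$ and the domain of $f_2^{-1}$ is a welcome refinement that the paper leaves implicit.

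The one substantive omission is existence. The paper treats this proposition as also furnishing at least one equilibrium (note the sentence introducing it), and proves this by observing that the last two equilibrium relations read $(x_3,x_4)=F(x_3,x_4)$ with
\[
F(u,v)=\bigl(L_3 f_1(u) f_2(uv),\,L_4(\xi+f_3(uv))\bigr),
\]
and that the boundedness of $f_1,f_2,f_3$ forces $F$ to map the compact convex box $[0,L_3]\times[0,(\xi+1)L_4]$ into itself; Brouwer's fixed-point theorem then yields a fixed point, hence an equilibrium. Your proposal establishes only the \emph{shape} of equilibria, not that the scalar equation \eqref{eq.states} has a solution; if existence is required, you should add this Brouwer step (or an equivalent intermediate-value argument on \eqref{eq.states}). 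As a minor point, invoking Proposition~\ref{prop.dissipativity} to place equilibria in $\Omega$ is, strictly speaking, an appeal to the bounds used in its proof rather than to positive invariance itself---the paper makes the same shortcut.
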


\begin{proof} From Proposition \ref{prop.dissipativity} it follows that any equilibrium state of system (\ref{sys.hpa.gr.dd}) belongs to the set $\Omega$. Moreover, An equilibrium point of system (\ref{sys.hpa.gr.dd}) is a solution of the following algebraic system:
\begin{equation}
\left\{\begin{array}{l}
k_1f_1(x_3)=w_1x_1,\\
k_2f_2(x_3x_4)x_1=w_2x_2,\\
k_3x_2=w_3x_3,\\
k_4(\xi+f_3(x_3x_4))=w_4x_4,
\end{array}\right.
\end{equation}
which is equivalent to
\begin{equation}\label{sys.eq.points}
\left\{\begin{array}{l}
\ds x_1=L_1f_1(x_3),\\
\ds x_2=\df{w_3x_3}{k_3},\\
\ds L_3f_2(x_3x_4)f_1(x_3)=x_3,\\
\ds L_4(\xi+f_3(x_3x_4))=x_4.
\end{array}\right.
\end{equation}
From the first two equations of (\ref{sys.eq.points}) it follows that the first two components of an equilibrium state are uniquely determined by the third component. The last two components of an equilibrium state represent a fixed point for the continuous function $F:\mathbb{R}^2\rightarrow \mathbb{R}^2$ defined by 
$$(u,v)\mapsto F(u,v)=\left(L_3f_1(u)f_2(uv),L_4(\xi+f_3(uv))\right)$$
From the boundedness properties of the functions $f_i$, $i\in\{1,2,3\}$ it easily follows that the function $F$ maps the convex compact set $[0,L_3]\times[0,(\xi+1)L_4]$ into itself. By Brouwer's fixed-point theorem we obtain the existence of at least one fixed point of the function $F$ in the set $[0,L_3]\times[0,(\xi+1)L_4]$. Therefore, system (\ref{sys.hpa.gr.dd}) has at least one equilibrium state.

From system (\ref{sys.eq.points}) we easily deduce (\ref{eq.states}), and hence we obtain the form of the equilibrium states given by (\ref{eq.state}).
\end{proof}

\begin{remark}
In the case of the minimal model of the HPA-axis, it has been shown \cite{Vinther_2011,Kaslik_Neamtu_2016} that there exists a unique equilibrium state. For the extended four-dimensional model (\ref{sys.hpa.gr.dd}), Proposition \ref{prop.eq.states} only shows the existence of at least one equilibrium state. The presence of the positive feedback function is often associated with the coexistence of several equilibrium states \cite{Gupta_2007,Sriram_2012}. 
\end{remark}

\section{Local stability analysis}

In this section, necessary and sufficient conditions for the local asymptotic stability of an equilibrium point $E$ are provided, choosing general delay kernels. Delay independent sufficient conditions are explored for the local asymptotic stability of the equilibrium point $E$, which may prove to be useful if the time delays in system (\ref{sys.hpa.gr.dd}) cannot be accurately estimated.

By linearizing the system (\ref{sys.hpa.gr.dd}) at an equilibrium point $E$, we obtain:
\begin{equation}\label{sys.lin}
\left\{\begin{array}{l}
\ds \dot y_1(t)=k_1f_1'(x_0)\int_{-\infty}^ty_3(s)h_{31}(t-s)ds-w_1y_1(t),\\
\ds \dot y_2(t)=k_2f_2(x_0 r_0)\int_{-\infty}^ty_1(s)h_1(t-s)ds+\df{k_1k_2}{w_1}f_1(x_0)r_0 f_2'(x_0 r_0)\int_{-\infty}^ty_3(s)h_{32}(t-s)ds+\\
\qquad\quad+\df{k_1k_2}{w_1}f_1(x_0)x_0 f_2'(x_0 r_0)y_4(t)-w_2y_2(t),\\
\ds \dot y_3(t)=k_3\int_{-\infty}^ty_2(s)h_2(t-s)ds-w_3y_3(t),\\
\ds \dot y_4(t)= k_4r_0 f_3'(x_0 r_0)\int_{-\infty}^ty_3(s)h_{34}(t-s)ds+k_4x_0 f_3'(x_0 r_0)y_4(t)-w_4y_4(t).
\end{array}\right.
\end{equation}
where $r_0=\df{1}{x_0}f_2^{-1}\left(\df{x_0}{L_3f_1(x_0)}\right)$.

The characteristic equation of the linearized system at the equilibrium point $E$ is:
\begin{align}\label{eq.char}
&( z+w_1)( z+w_2)( z+w_3)(z+\tilde{w_4})+a(w_4-\tilde{w_4})( z+w_1)H_2( z)H_{34}( z)+\\
\nonumber &+b(z+\tilde{w_4})H_1( z)H_2( z)H_{31}( z)+a(z+w_1)(z+\tilde{w_4})H_2(z)H_{32}(z)=0,
\end{align}
where $H_i( z)=\int_{0}^\infty e^{- z s}h_i(s)ds$ are the Laplace transforms of the kernels $h_i$, $i\in\{1,2,31,32,34\}$ and
\begin{align}
\label{eq.a} a&=-\df{k_1k_2k_3}{w_1}f_1(x_0)f_2'(x_0 r_0)r_0=-w_2w_3\df{x_0r_0f_2'(x_0r_0)}{f_2(x_0r_0)}>0,\\
\label{eq.b} b&=-k_1k_2k_3f_1'(x_0)f_2(x_0 r_0)=-w_1w_2w_3\df{x_0f_1'(x_0)}{f_1(x_0)}>0,\\
\label{eq.c} \tilde{w_4}&=w_4-k_4x_0 f_3'(x_0 r_0)<w_4.
\end{align}

For the theoretical analysis, we introduce the following set of inequalities:
\begin{align*}
(I_0)\qquad & \tilde{w_4}>0;\\
(I_1)\qquad & (w_1+\tilde{w_4})(w_2+\tilde{w_4})(w_3+\tilde{w_4})\geq (\tilde{w_4}-w_1)(\tilde{w_4}-w_4)(w_1+w_2+w_3+\tilde{w_4});\\
(I_2)\qquad & a(w_1+w_4)+b\leq (w_1+w_2)(w_2+w_3)(w_1+w_3);\\
(I_3)\qquad & \df{aw_4}{\tilde{w_4}}+\df{b}{w_1}<w_2w_3; \\
(\overline{I_3})\qquad & \df{aw_4}{\tilde{w_4}}+\df{b}{w_1}\geq w_2w_3.
\end{align*}

\begin{theorem}[Local asymptotic stability]\label{thm.stab} $ $
\begin{enumerate}
\item If there is no time-delay and $(I_0)$, $(I_1)$ and $(I_2)$ are satisfied, the equilibrium point $E$ of system (\ref{sys.hpa.gr.dd}) is locally asymptotically stable.
\item For any delay kernels $h_i(t)$, $i\in\{1,2,31,32,34\}$, if $(I_0)$ and $(I_3)$ hold, then the equilibrium point $E$ of system (\ref{sys.hpa.gr.dd}) is locally asymptotically stable.
\end{enumerate}
\end{theorem}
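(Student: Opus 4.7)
For Part 1 (the non-delayed case), when all kernels are Dirac deltas at $0$ we have $H_i(z)\equiv 1$, and \eqref{eq.char} collapses to a quartic $P(z)=z^4+A_3z^3+A_2z^2+A_1z+A_0$. The plan is to apply the Routh--Hurwitz criterion. First I would expand $(z+w_1)(z+w_2)(z+w_3)(z+\tilde{w_4})$ in terms of the elementary symmetric polynomials $s_1,s_2,s_3,s_4$ of $\{w_1,w_2,w_3,\tilde{w_4}\}$ and add the linear and quadratic corrections from the $a$- and $b$-terms, obtaining
\[
A_3=s_1,\quad A_2=s_2+a,\quad A_1=s_3+a(w_1+w_4)+b,\quad A_0=s_4+aw_1w_4+b\tilde{w_4}.
\]
Positivity of each $A_i$ is immediate from $(I_0)$ together with $a,b>0$ (see \eqref{eq.a}--\eqref{eq.b}). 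The intermediate inequality $A_3A_2-A_1>0$ reduces to $(I_2)$ by invoking the three-variable identity $(u+v+w)(uv+uw+vw)-uvw=(u+v)(v+w)(u+w)$ to factor the $\tilde{w_4}$-independent part of $s_1s_2-s_3$; substituting $(I_2)$ and collecting the $\tilde{w_4}$-dependent remainders leaves a manifestly positive expression of the form $s_1\bigl[a+\tilde{w_4}(w_1+w_2+w_3)\bigr]$. The final Routh--Hurwitz condition $A_1(A_3A_2-A_1)>A_3^2A_0$ is where $(I_1)$ enters, and verifying it is the main obstacle: after substitution, a careful regrouping of the resulting quartic combination in $w_i,\tilde{w_4},w_4,a,b$ should match the compact form $(I_1)$.

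For Part 2 (delay-independent stability), the plan is to show that under $(I_0)$ and $(I_3)$ the characteristic equation \eqref{eq.char} admits no root $z$ with $\Re z\geq 0$, irrespective of the choice of kernels. Two elementary bounds drive the argument: since each $h_i$ is a probability density, for any $z$ with $\Re z\geq 0$,
\[
|H_i(z)|\leq\int_0^\infty e^{-(\Re z)s}h_i(s)\,ds\leq 1,
\]
and $\Re z\geq 0$ forces $|z+w_i|\geq w_i$ and $|z+\tilde{w_4}|\geq \tilde{w_4}>0$ (the last using $(I_0)$). Assume for contradiction a root $z_0$ with $\Re z_0\geq 0$. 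Dividing \eqref{eq.char} by $(z_0+w_1)(z_0+\tilde{w_4})\neq 0$ yields
\[
(z_0+w_2)(z_0+w_3)=-aH_2(z_0)H_{32}(z_0)-\df{a(w_4-\tilde{w_4})H_2(z_0)H_{34}(z_0)}{z_0+\tilde{w_4}}-\df{bH_1(z_0)H_2(z_0)H_{31}(z_0)}{z_0+w_1}.
\]
Taking moduli, the left-hand side is at least $w_2w_3$, while, using $w_4-\tilde{w_4}>0$ from \eqref{eq.c}, the right-hand side is at most
\[
a+\df{a(w_4-\tilde{w_4})}{\tilde{w_4}}+\df{b}{w_1}=\df{aw_4}{\tilde{w_4}}+\df{b}{w_1},
\]
which is strictly smaller than $w_2w_3$ by $(I_3)$---a contradiction. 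Hence every characteristic root lies in the open left half-plane, yielding local asymptotic stability.

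The principal difficulty lies in Part 1: matching the third Routh--Hurwitz condition to the succinct inequality $(I_1)$ requires expanding a bulky polynomial and meticulously collecting terms. In contrast, Part 2 is a clean modulus-bound argument driven entirely by the fact that the normalization $\int_0^\infty h_i=1$ forces $|H_i(z)|\leq 1$ on the closed right half-plane.
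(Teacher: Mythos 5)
Your Part 2 is correct and rests on the same estimates as the paper's proof: the bounds $|H_i(z)|\leq 1$ and $|z+w|\geq w$ on the closed right half-plane, combined with $(I_0)$ and $(I_3)$, exclude characteristic roots with $\Re z\geq 0$. The paper packages the inequality as $|\psi(z)|<|\varphi(z)|$ and invokes Rouch\'e's theorem, whereas you divide by $(z_0+w_1)(z_0+\tilde{w_4})$ and argue by direct contradiction; your version is marginally more elementary, but the chain of bounds is identical, ending in the same comparison $\df{aw_4}{\tilde{w_4}}+\df{b}{w_1}<w_2w_3$.

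Part 1, however, contains a genuine gap. You correctly reduce to the Routh--Hurwitz conditions for the quartic, and your treatment of the second Hurwitz minor is sound: indeed $s_1s_2-s_3=S+\tilde{w_4}(w_1+w_2+w_3)s_1$ with $S=(w_1+w_2)(w_2+w_3)(w_1+w_3)$, so $(I_2)$ yields $A_3A_2-A_1\geq s_1\left(a+\tilde{w_4}(w_1+w_2+w_3)\right)>0$. But the decisive condition $A_1(A_3A_2-A_1)-A_3^2A_0>0$ is exactly the step you defer with ``should match the compact form $(I_1)$,'' and it does not follow from $(I_1)$ alone by regrouping: both hypotheses are needed there. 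The paper closes this step with the explicit factorization
\begin{align*}
c_1c_2c_3-c_3^2-c_1^2c_4=&\,(S-b-a(w_1+w_4))(T+b+a(w_1+w_4))\\
&+a\,s_1\bigl(T-(\tilde{w_4}-w_1)(\tilde{w_4}-w_4)s_1\bigr),
\end{align*}
where $T=(w_1+\tilde{w_4})(w_2+\tilde{w_4})(w_3+\tilde{w_4})$ and $s_1=w_1+w_2+w_3+\tilde{w_4}$; the first summand is nonnegative by $(I_2)$ and the second by $(I_1)$ (with $(I_0)$ guaranteeing $T>0$). Without this identity, or an equivalent one, the ``careful regrouping'' you flag as the main obstacle is precisely the missing content of the proof, and your proposed division of labor --- $(I_2)$ for the second minor, $(I_1)$ for the third --- would send you looking for a decomposition that does not exist.
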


\begin{proof} 1. In the absence of delays, the characteristic equation (\ref{eq.char}) is given by:
\begin{equation}
z^4+c_1z^3+c_2z^2+c_3z+c_4=0,
\end{equation} where
\begin{align*}
c_1&=w_1+w_2+w_3+\tilde{w_4}>0, \\
c_2&=w_1w_2+w_2w_3+w_1w_3+(w_1+w_2+w_3)\tilde{w_4}+a>0,\\
c_3&=w_1w_2w_3+(w_1w_2+w_2w_3+w_1w_3)\tilde{w_4}+a(w_1+w_4)+b>0,\\
c_4&=(w_1w_2w_3+b)\tilde{w_4}+aw_1w_4>0.
\end{align*}
Based on the Routh-Hurwitz stability test, it suffices to prove that
$$
c_1c_2c_3-c_3^2-c_1^2c_4>0.
$$
From this inequality it clearly follows that $c_1c_2-c_3>0$.

Denoting
\begin{align*}
S&=(w_1+w_2)(w_1+w_3)(w_2+w_3)\\
T&=(w_1+\tilde{w_4})(w_2+\tilde{w_4})(w_3+\tilde{w_4})
\end{align*}
we obtain
\begin{align*}
c_1c_2c_3-c_3^2-c_1^2c_4=&(S-b-a(w_1+w_4))(T+b+a(w_1+w_4))+\\
&+a(w_1+w_2+w_3+\tilde{w_4})(T-(\tilde{w_4}-w_1)(\tilde{w_4}-w_4)(w_1+w_2+w_3+\tilde{w_4})))
\end{align*}
Using inequalities $(I_0)$, $(I_1)$ and $(I_2)$ it is easy to see that $c_1c_2c_3-c_3^2-c_1^2c_4>0$. The Routh-Hurwitz stability criterion implies that the equilibrium point $E$ is asymptotically stable.

2. In the presence of delays, the characteristic equation (\ref{eq.char}) can be expressed as $$\varphi(z)=\psi(z),$$
where $\varphi$ and $\psi$ are 
\begin{align*}
\varphi(z)&=-( z+w_1)( z+w_2)( z+w_3)(z+\tilde{w_4}),\\
\psi(z)&=a(w_4-\tilde{w_4})( z+w_1)H_2( z)H_{34}( z)+b(z+\tilde{w_4})H_1( z)H_2( z)H_{31}( z)+a(z+w_1)(z+\tilde{w_4})H_2(z)H_{32}(z).
\end{align*}
The functions  $\varphi$ and $\psi$ are holomorphic in the right half-plane.

Considering $z\in \mathbb{C}$ with $\Re(z)\geq 0$, the properties of the delay kernels (\ref{delay.kernel.properties}) imply:
\begin{align*}
|H_i(z)|=\left|\int_{0}^\infty e^{-z s}h_i(s)ds\right|\leq\int_0^\infty |e^{-z s}|h_i(s)ds=\int_0^\infty e^{-\Re(z) s}h_i(s)ds\leq \int_0^\infty h_i(s)ds=1,
\end{align*}
for any $i\in\{1,2,31,32,34\}$. Therefore, based on inequalities $(I_0)$ and $(I_3)$, we have:
\begin{align*}
|\psi(z)|&\leq a(w_4-\tilde{w_4})|z+w_1||H_2(z)||H_{34}(z)|+b|z+\tilde{w_4}||H_1(z)||H_2(z)||H_{31}(z)|+\\
&\quad+ a|z+w_1||z+\tilde{w_4}||H_2(z)||H_{32}(z)|\\
 &\leq a(w_4-\tilde{w_4})|z+w_1|+b|z+\tilde{w_4}|+a|z+w_1||z+\tilde{w_4}|\\
&=|z+w_1||z+\tilde{w_4}|\left(\frac{a(w_4-\tilde{w_4})}{|z+\tilde{w_4}|}+\frac{b}{|z+w_1|}+a\right)\\
&\leq|z+w_1||z+\tilde{w_4}|\left(\frac{a(w_4-\tilde{w_4})}{\tilde{w_4}} +\frac{b}{w_1}+a\right)\\
&<|z+w_1||z+\tilde{w_4}| w_2w_3 \\
&=|z+w_1||z+w_2||z+w_3||z+\tilde{w_4}|=|\varphi(z)|.
\end{align*}
where the inequality $|z+w|\geq w$, for any  $z\in \mathbb{C}$ with $\Re(z)\geq 0$ and $w>0$, has been repeatedly used.

Hence, the inequality $|\psi(z)|<|\varphi(z)|$ is true for any $z$ in the right half plane, and Rouch\'{e}'s theorem implies that the characteristic equation (\ref{eq.char}) does not have any root in the right half-plane (or on the imaginary axis). Therefore, all the roots of (\ref{eq.char}) are in the open left half plane, and it follows that the equilibrium $E$ is asymptotically stable.
\end{proof}

\begin{remark}
Assume that  $(I_0)$ holds and that the delay kernels $h_i(t)$, $i\in\{1,2,31,32,34\}$ are chosen. If the equilibrium point $E$ of system (\ref{sys.hpa.gr.dd}) is unstable, Theorem \ref{thm.stab} implies that inequality $(\overline{I_3})$ holds. 
\end{remark}

\section{Bifurcation analysis}
In this section, we explore the possibility of the occurrence of limit cycles in a neighborhood of $E$, due to Hopf bifurcations, that reflect the ultradian rhythm of the HPA axis.

For simplicity, we further assume that
$$
H_{32}(z)=H_{34}(z)=H_1(z)H_{31}(z),
$$
and we denote
$$H(z)=H_2(z)H_{32}(z)=H_2(z)H_{34}(z)=H_1(z)H_2(z)H_{31}(z).$$
We emphasize that $H(z)$ is the Laplace transform of the convolution of $h_2$ and $h_{32}$:
$$h(t)=\int_0^t h_2(s)h_{32}(t-s)ds,$$
with the average time-delay
\begin{equation}\label{eq.tau}
\tau=\int_0^{\infty}sh(s)ds=\tau_2+\tau_{32}=\tau_2+\tau_{34}=\tau_1+\tau_2+\tau_{31},
\end{equation}
where $\tau_i$ represent the average delays of the kernels $h_i$, for any $i\in\{1,2,31,32,34\}$. 

The characteristic equation (\ref{eq.char}) is
$$
( z+w_1)( z+w_2)( z+w_3)(z+\tilde{w_4})+[a(z+w_1)(z+w_4)+b(z+\tilde{w_4})]H(z)=0,
$$
which can be rewritten as:
\begin{equation}\label{eq.char.bif}
H(z)^{-1}=Q(z),
\end{equation}
where
$$Q(z)=-\frac{a(z+w_1)(z+w_4)+b(z+\tilde{w_4})}{( z+w_1)( z+w_2)( z+w_3)(z+\tilde{w_4})}.$$
The properties of the function $Q(z)$ are given in the following Lemma.

\begin{lemma}\label{lem.Q}
Assume that $(I_0)$ holds. 
\begin{itemize}
\item[a.] The function
$$\om\mapsto|Q(i\om)|=\sqrt{\frac{(b\tilde{w_4}+aw_1w_4-a\om^2)^2+\om^2(a(w_1+w_4)+b)^2}{(\om^2+w_1^2)(\om^2+w_2^2)(\om^2+w_3^2)(\om^2+\tilde{w_4}^2)}}$$
defined on $[0,\infty)$ is strictly decreasing.  

\item[b.] A unique  positive real root $\om_0$ exists for the equation $|Q(i\om)|=1$ if and only if inequality $(\overline{I3})$ holds.

\item[c.] The function $Q$ satisfies the following inequality:
$$\Im\left(\df{Q'(i\om)}{Q(i\om)}\right)>0\qquad\forall\,\om>0.$$
\end{itemize}
\end{lemma}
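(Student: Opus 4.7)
The plan for (a) is to split the numerator $|N_Q(i\omega)|^2$ into three positive pieces via the identity $|z_1+z_2|^2=|z_1|^2+|z_2|^2+2\Re(z_1\overline{z_2})$ applied to $z_1=a(i\omega+w_1)(i\omega+w_4)$ and $z_2=b(i\omega+\tilde{w_4})$. A short computation gives $2\Re(z_1\overline{z_2})=2ab[(w_1+w_4-\tilde{w_4})u+w_1w_4\tilde{w_4}]$ with $u=\omega^2$, which is positive thanks to $(I_0)$ and $\tilde{w_4}<w_4$. Dividing by $|D(i\omega)|^2$ yields $|Q(i\omega)|^2=T_1(u)+T_2(u)+T_3(u)$ with
\[
T_1(u)=\frac{a^2(u+w_4^2)}{(u+w_2^2)(u+w_3^2)(u+\tilde{w_4}^2)},\quad T_2(u)=\frac{b^2}{(u+w_1^2)(u+w_2^2)(u+w_3^2)},
\]
\[
T_3(u)=\frac{2ab\,[(w_1+w_4-\tilde{w_4})\,u+w_1w_4\tilde{w_4}]}{(u+w_1^2)(u+w_2^2)(u+w_3^2)(u+\tilde{w_4}^2)},
\]
and it suffices to prove that each $T_i$ is strictly decreasing.

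Now $T_2$ is obviously decreasing, and for $T_1$ the inequality $\tilde{w_4}\leq w_4$ gives $\frac{1}{u+w_4^2}\leq\frac{1}{u+\tilde{w_4}^2}$, so its logarithmic derivative $\frac{1}{u+w_4^2}-\frac{1}{u+w_2^2}-\frac{1}{u+w_3^2}-\frac{1}{u+\tilde{w_4}^2}$ is strictly negative (the $w_2,w_3$ terms form a strict surplus). The main obstacle is $T_3$: setting $q=w_1+w_4-\tilde{w_4}>0$ and $r=w_1w_4\tilde{w_4}>0$, its logarithmic derivative is $\frac{1}{u+r/q}-\sum_j\frac{1}{u+\omega_j^2}$, and I would reduce negativity to the cleaner inequality $\frac{1}{u+r/q}<\frac{1}{u+w_1^2}+\frac{1}{u+\tilde{w_4}^2}$. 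After clearing denominators this becomes a quadratic in $u$ with nonnegative coefficients provided $r/q\geq w_1^2\tilde{w_4}^2/(w_1^2+\tilde{w_4}^2)$, which rearranges to $w_4(w_1^2+\tilde{w_4}^2)\geq w_1\tilde{w_4}(w_1+w_4-\tilde{w_4})$; viewed as a linear function of $w_4$ with positive slope $w_1^2-w_1\tilde{w_4}+\tilde{w_4}^2>0$, this quantity equals $\tilde{w_4}^3$ at $w_4=\tilde{w_4}$ and hence is positive for all $w_4\geq\tilde{w_4}$.

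Parts (b) and (c) then follow cleanly from (a). For (b), one computes $|Q(0)|=(aw_1w_4+b\tilde{w_4})/(w_1w_2w_3\tilde{w_4})$ and observes $|Q(i\omega)|\to 0$ as $\omega\to\infty$; strict monotonicity shows that $|Q(i\omega)|=1$ has a unique positive root if and only if $|Q(0)|\geq 1$, an inequality equivalent to $(\overline{I_3})$ after multiplication by $w_2w_3$. For (c), $Q$ has neither zeros nor poles on the imaginary axis (the roots of $N_Q$ and $D$ all lie in the open left half plane by $(I_0)$ together with positivity of the coefficients), so writing $Q(i\omega)=r(\omega)e^{i\theta(\omega)}$ with $r>0$ and applying the chain rule gives
\[
i\,\frac{Q'(i\omega)}{Q(i\omega)}=\frac{d}{d\omega}\log Q(i\omega)=\frac{r'(\omega)}{r(\omega)}+i\,\theta'(\omega),
\]
whence $\Im(Q'(i\omega)/Q(i\omega))=-r'(\omega)/r(\omega)$. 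By (a) $r$ is strictly decreasing, so $r'(\omega)<0$ for $\omega>0$ and the required positivity follows.
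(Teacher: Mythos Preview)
Your proof is correct, but part (a) proceeds by a genuinely different route than the paper's. You keep the cross--term $T_3$ intact and establish its monotonicity through the algebraic inequality $\frac{1}{u+r/q}<\frac{1}{u+w_1^2}+\frac{1}{u+\tilde w_4^2}$, which you then reduce to the linear estimate $w_4(w_1^2+\tilde w_4^2)\geq w_1\tilde w_4(w_1+w_4-\tilde w_4)$ in $w_4$. The paper instead performs a partial--fraction decomposition: it writes
\[
|Q(i\omega)|^2=\frac{1}{(\omega^2+w_2^2)(\omega^2+w_3^2)}\left[a^2+\frac{d_1}{\omega^2+w_1^2}+\frac{d_2}{\omega^2+\tilde w_4^2}\right]
\]
with explicit constants $d_1,d_2>0$, so that each summand is manifestly decreasing and no further inequality is needed. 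The paper's decomposition is cleaner and avoids your case analysis on $T_3$; on the other hand, your approach is completely elementary and does not require spotting the partial--fraction identity. Your treatment of part (b) matches the paper's. For part (c), the paper simply cites an earlier work, whereas you supply a self--contained argument: the polar representation $Q(i\omega)=r(\omega)e^{i\theta(\omega)}$ gives $\Im\bigl(Q'(i\omega)/Q(i\omega)\bigr)=-r'(\omega)/r(\omega)$, and then strict monotonicity from (a) yields the conclusion directly. This is a worthwhile addition.
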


\begin{proof}
To prove, a. it is easy to see that
$$
|Q(i\om)|^2=\frac{1}{(\om^2+w_2^2)(\om^2+w_3^2)}\left[a^2+\frac{d_1}{(\om^2+w_1^2)}+\frac{d_2}{(\om^2+\tilde{w_4}^2)}\right]
$$
where
\begin{align*}
d_1=& b^2+2ab\frac{w_1(w_1+w_4)}{w_1+\tilde{w_4}}>0\\
d_2=& 2ab\frac{\tilde{w_4}(w_4-\tilde{w_4})}{w_1+\tilde{w_4}}>0
\end{align*}
Therefore, $\om\mapsto|Q(i\om)|$ is strictly decreasing on $[0,\infty)$, and tends to $0$ as $\om\rightarrow\infty$. Therefore, the equation $|Q(i\om)|=1$ admits a unique positive solution if and only if $|Q(0)|>1$. This implies  $w_1w_2w_3\tilde{w_4}<aw_1w_4+b\tilde{w_4}$, which in turn, is equivalent to $(\overline{I3})$, and b. is proved.

Point c. follows from \cite{Kaslik_Neamtu_2016}. 
\end{proof}

For the bifurcation analysis, due to the complexity of the problem, we restrict our attention to Dirac kernels and Gamma kernels.

\subsection{Dirac kernels}

If all delay kernels are of Dirac type: $h_1(t)=\delta(t-\tau_1)$, $h_2(t)=\delta(t-\tau_2)$, $h_{31}(t)=\delta(t-\tau_{31})$, $h_{32}(t)=\delta(t-\tau_{32})$, $h_{34}(t)=\delta(t-\tau_{34})$ where $\tau_1,\tau_2,\tau_{31},\tau_{32},\tau_{34}\geq 0$ satisfy the property
\begin{equation}\label{cond.dirac}
\tau_2+\tau_{32}=\tau_2+\tau_{34}=\tau_1+\tau_2+\tau_{31}=\tau>0,
\end{equation}
then, the characteristic equation (\ref{eq.char.bif}) becomes:
\begin{equation}\label{eq.char.Dirac}
e^{\tau z}=Q(z).
\end{equation}
Choosing $\tau$ as bifurcation parameter and following the same proof as in \cite{Kaslik_Neamtu_2016}, we have:

\begin{theorem}[Hopf bifurcations; Dirac kernels]\label{thm.bif.dirac}
If inequalities  $(I_0)$, $(I_1)$, $(I_2)$ and $(\overline{I_3})$  hold, considering $\om_0>0$ given by Lemma \ref{lem.Q} and
\begin{equation}\label{eq.tau.Dirac}
\tau_p=\frac{\arccos\left[\Re(Q(i\om_0))\right]+2p\pi}{\om_0},\quad p\in\mathbb{Z}^+,
\end{equation}
the equilibrium point $E$ is asymptotically stable if any only if $\tau\in[0,\tau_0)$. For any $p\in\mathbb{Z}^+$, at $\tau=\tau_p$, a Hopf bifurcation takes place in a neighborhood of the equilibrium point $E$ of system (\ref{sys.hpa.gr.dd}).
\end{theorem}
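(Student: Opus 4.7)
The plan is to analyze the characteristic equation $e^{\tau z}=Q(z)$ by tracking its roots as $\tau$ varies, relying on Lemma \ref{lem.Q} to locate purely imaginary roots and Theorem \ref{thm.stab}.1 as a starting point. First, I would note that at $\tau=0$ the characteristic equation reduces to the polynomial treated in Theorem \ref{thm.stab}.1, so under $(I_0)$, $(I_1)$, $(I_2)$ the equilibrium is asymptotically stable for $\tau=0$. Since the roots of $e^{\tau z}=Q(z)$ depend continuously on the parameter $\tau$, stability can be lost only when a root crosses the imaginary axis.

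Next, I would search for purely imaginary roots $z=i\omega$, $\omega>0$. Writing $e^{i\tau\omega}=Q(i\omega)$ forces $|Q(i\omega)|=1$, and Lemma \ref{lem.Q}.b guarantees, under $(\overline{I_3})$, the existence of a unique $\omega_0>0$ solving this. The corresponding critical delays are then obtained by matching arguments: $\tau\omega_0\equiv\arg Q(i\omega_0)\pmod{2\pi}$, which yields precisely the family $\{\tau_p\}_{p\in\mathbb{Z}^+}$ given in (\ref{eq.tau.Dirac}). No other $\omega$ yields imaginary roots, so the crossings can only happen at the $\tau_p$'s.

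Then I would verify the transversality condition at each $\tau_p$. Differentiating $e^{\tau z}=Q(z)$ implicitly in $\tau$ yields
\begin{equation*}
\left(\frac{dz}{d\tau}\right)^{-1}=\frac{Q'(z)}{zQ(z)}-\frac{\tau}{z},
\end{equation*}
and evaluating at $z=i\omega_0$ gives
\begin{equation*}
\Re\left.\left(\frac{dz}{d\tau}\right)^{-1}\right|_{z=i\omega_0}=\frac{1}{\omega_0}\,\Im\!\left(\frac{Q'(i\omega_0)}{Q(i\omega_0)}\right)>0
\end{equation*}
by Lemma \ref{lem.Q}.c. Hence at every $\tau_p$ a conjugate pair of roots crosses the imaginary axis from left to right with non-zero speed, which supplies the transversality hypothesis of the Hopf bifurcation theorem and simultaneously shows that the number of right half-plane roots is strictly increasing in $p$.

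Combining these ingredients yields the \textbf{if and only if} statement: for $\tau\in[0,\tau_0)$ all roots remain in the open left half-plane by continuous dependence and the absence of imaginary roots, whereas for $\tau>\tau_0$ the monotone transversal crossings at the $\tau_p$'s ensure that at least one pair of roots is in the open right half-plane, destroying stability. The main obstacle I anticipate is the argument-matching step that produces the precise formula (\ref{eq.tau.Dirac}): one must check that the branch chosen via $\arccos[\Re(Q(i\omega_0))]$ is consistent with the sign of $\Im(Q(i\omega_0))$, and that the simple-root hypothesis required by the Hopf theorem holds at each $\tau_p$ — the latter being a consequence of the strict inequality in the transversality computation above, which prevents a double imaginary root.
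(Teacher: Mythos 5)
Your proposal is correct and follows exactly the route the paper intends: the paper itself gives no written proof of Theorem \ref{thm.bif.dirac}, deferring to \cite{Kaslik_Neamtu_2016}, and the argument there is precisely your standard root-crossing scheme — stability at $\tau=0$ from Theorem \ref{thm.stab}.1, location of the unique imaginary crossing frequency $\om_0$ via Lemma \ref{lem.Q}.b, the critical delays from argument matching, and transversality $\Re\left(\frac{dz}{d\tau}\right)^{-1}\Big|_{z=i\om_0}=\frac{1}{\om_0}\Im\left(\frac{Q'(i\om_0)}{Q(i\om_0)}\right)>0$ from Lemma \ref{lem.Q}.c. The one caveat you raise yourself (that the $\arccos$ branch in (\ref{eq.tau.Dirac}) agrees with $\arg Q(i\om_0)$ only when $\Im Q(i\om_0)\geq 0$) is a genuine subtlety inherited from the cited reference rather than a defect of your argument.
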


\subsection{Gamma kernels}

If all delay kernels are of Gamma type: $h_1(t)=\df{t^{p_1-1}e^{-t/\theta}}{\theta^{p_1}(p_1-1)!}$, $h_2(t)=\df{t^{p_2-1}e^{-t/\theta}}{\theta^{p_2}(p_2-1)!}$, $h_{31}(t)=\df{t^{p_{31}-1}e^{- t/\theta}}{\theta^{p_{31}} (p_{31}-1)!}$,  $h_{32}(t)=\df{ t^{p_{32}-1}e^{-t/\theta}}{\theta^{p_{32}}(p_{32}-1)!}$,  $h_{34}(t)=\df{ t^{p_{34}-1}e^{-t/\theta}}{\theta^{p_{34}}(p_{34}-1)!}$, where $\theta>0$ and $p_1,p_2,p_{31},p_{32},p_{34}\in\mathbb{Z}^+\setminus\{0\}$ satisfy:
$$p_2+p_{32}=p_2+p_{34}=p_1+p_2+p_{31}=p\geq 2,$$
the characteristic equation (\ref{eq.char}) is:
\begin{equation}\label{eq.char.Gamma}
(\theta z+1)^p=Q(z).
\end{equation}
Choosing $\theta$ as bifurcation parameter, as in \cite{Kaslik_Neamtu_2016}, the following result holds:

\begin{theorem}[Hopf bifurcations; Gamma kernels]\label{thm.bif.gamma}
If inequalities $(I_0)$, $(I_1)$, $(I_2)$ and $(\overline{I_3})$ hold and $\om_p$ is the largest real root from the interval $(0,\om_0)$ of the equation
\begin{equation}\label{eq.omega.Gamma}
T_p\left(\df{1}{|Q(i\om)|^{1/p}}\right)=\frac{\Re(Q(i\om))}{|Q(i\om)|}
\end{equation}
where $T_p$ denotes the Chebyshev polynomial of the first kind of order $p$, considering
\begin{equation}\label{eq.beta.Gamma}
\theta_p=\frac{1}{\om_p}\sqrt{|Q(i\om_p)|^{2/p}-1}.
\end{equation}
the equilibrium point $E$ is asymptotically stable if $\theta\in(0,\theta_p)$. At $\theta=\theta_p$, system (\ref{sys.hpa.gr.dd}) undergoes a Hopf bifurcation at the equilibrium point $E$. 
\end{theorem}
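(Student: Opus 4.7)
The plan is to mirror the Dirac-kernel analysis of \cite{Kaslik_Neamtu_2016}, exploiting the factored form of the Gamma Laplace transform $H(z)=(\theta z+1)^{-p}$ and extracting the Hopf frequency via a modulus/argument split together with the Chebyshev identity $\cos(p\phi)=T_p(\cos\phi)$. First I would clear denominators in (\ref{eq.char.Gamma}) to obtain a genuine polynomial characteristic equation of fixed degree $4+p$ whose roots depend continuously on $\theta$. At $\theta=0$ this reduces to the non-delayed equation handled in Theorem \ref{thm.stab}.1, so under $(I_0)$, $(I_1)$, $(I_2)$ the four finite roots all lie in the open left half-plane while the remaining $p$ roots escape to $-\infty$; continuity then guarantees asymptotic stability for all sufficiently small $\theta>0$.

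Next I would locate the imaginary-axis crossings. Setting $z=i\om$ with $\om>0$ and taking moduli yields $(1+\om^2\theta^2)^{p/2}=|Q(i\om)|$, which forces $|Q(i\om)|\geq 1$; Lemma \ref{lem.Q}(a)--(b) then restricts $\om$ to $(0,\om_0)$ (this is where $(\overline{I_3})$ enters) and determines $\theta$ uniquely via (\ref{eq.beta.Gamma}) as a strictly decreasing function of $\om$, blowing up as $\om\to 0^+$ and vanishing as $\om\to\om_0^-$. Taking arguments produces $p\arctan(\om\theta)\equiv\arg Q(i\om)\pmod{2\pi}$; applying $\cos(\cdot)$, substituting $\cos(\arctan x)=1/\sqrt{1+x^2}$ together with the modulus relation, and invoking $\cos(p\phi)=T_p(\cos\phi)$ reproduces exactly (\ref{eq.omega.Gamma}). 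By monotonicity of $\om\mapsto\theta(\om)$, the \emph{largest} admissible root $\om_p\in(0,\om_0)$ corresponds to the \emph{smallest} positive critical delay, namely $\theta_p$ in (\ref{eq.beta.Gamma}). Consequently no imaginary-axis root of (\ref{eq.char.Gamma}) exists for $\theta\in(0,\theta_p)$, and combined with the base case and continuous dependence of roots the equilibrium $E$ remains asymptotically stable throughout this interval.

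Finally I would verify the transversality condition at $\theta=\theta_p$ by differentiating (\ref{eq.char.Gamma}) implicitly, which yields at the crossing
$$\df{dz}{d\theta}=\df{pz(\theta z+1)^{p-1}}{Q'(z)-p\theta(\theta z+1)^{p-1}}.$$
Using the relation $(\theta z+1)^p=Q(z)$ to eliminate $(\theta z+1)^{p-1}=Q(z)/(\theta z+1)$, a short manipulation as in \cite{Kaslik_Neamtu_2016} reduces the sign of $\Re(dz/d\theta)$ at $z=i\om_p$ to the sign of $\Im(Q'(i\om_p)/Q(i\om_p))$, which is strictly positive by Lemma \ref{lem.Q}(c). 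A pair of simple conjugate roots therefore crosses the imaginary axis from left to right at $\theta=\theta_p$, confirming the Hopf bifurcation at $E$.

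The hardest step is the transversality check: the factor $(\theta z+1)^{p-1}$ must be handled carefully to recover the clean sign identity available in the Dirac case, and one must also confirm that the crossing root is simple, otherwise the Hopf conclusion would degenerate. A related subtlety in the second step is that applying $\cos(\cdot)$ to the argument equation is not injective, so (\ref{eq.omega.Gamma}) may in principle admit spurious roots; these must be ruled out by tracking the sign of $\sin(p\arctan(\om\theta))$ along the monotone branch $\om\mapsto\theta(\om)$, which is precisely why the theorem selects the \emph{largest} such $\om_p$.
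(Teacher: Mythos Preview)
Your proposal is correct and follows exactly the route the paper takes: the paper gives no independent argument here but simply defers to \cite{Kaslik_Neamtu_2016}, and your sketch is a faithful (and considerably more detailed) unpacking of that same reference via the modulus/argument split, the Chebyshev identity, and the transversality computation based on Lemma~\ref{lem.Q}(c). The only point worth flagging is that in the Gamma case the real part of $(dz/d\theta)^{-1}$ carries an extra term $\tfrac{\theta}{p}\,\Re\!\big(Q'(i\om_p)/Q(i\om_p)\big)$ compared with the Dirac case, so the reduction to the sign of $\Im(Q'/Q)$ is not quite as immediate as you state---but you already identify this as the delicate step and appropriately defer to \cite{Kaslik_Neamtu_2016} for the details, just as the paper does.
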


\section{Numerical simulations}

The literature values of the elimination constants $w_i$, $i\in\{1,2,3\}$ are given by $w_i=\frac{\ln(2)}{T_i}$, where $T_i$ is the plasma half-life of hormones: $T_1\approx 4$ min, $T_2\approx 19.9$ min, $T_3\approx 76.4$ min \cite{Vinther_2011,Carroll_2007}. We choose $w_4=0.001~\text{min}^{-1}$ as in \cite{Sriram_2012}.

For simplicity, let $\eta=\mu=1$ and hence, the considered feedback functions are: 
$$f_1(x)=\frac{c_1^\alpha}{c_1^\alpha+x^\alpha}\quad,\quad f_2(x)=\frac{c_2^\alpha}{c_2^\alpha+x^\alpha}\quad,\quad f_3(x)=\frac{x^\beta}{c_3^\beta+x^\beta}$$
with $\alpha=4$ and $\beta=5$ as in \cite{Sriram_2012}, $c_1=2$ ng/ml as in \cite{Kaslik_Neamtu_2016} and $c_2=c_3=0.8$ ng/ml.

The normal equilibrium state $E$ should reflect the normal mean values of the hormones: $\bar{x}^n_1=7.659$ pg/ml (24-h mean value of CRH), $\bar{x}^n_2=21$ pg/ml (24-h mean value of ACTH) and $\bar{x}^n_3=3.055$ ng/ml (24-h mean value of free CORT) \cite{Carroll_2007}. In accordance with \cite{Gupta_2007}, we assume $\bar{x}^n_4=0.1$. Choosing $\xi=0.1$, from system (\ref{sys.eq.points}) we deduce:
\begin{align*}
k_1&=w_1 \df{\bar{x}^n_1}{f_1(\bar{x}^n_3)}=8.55261 \frac{\textrm{pg}}{\textrm{ml}\cdot\textrm{min}};\\
k_2&=w_2\df{\bar{x}^n_2}{\bar{x}^n_1f_2(\bar{x}^n_3\bar{x}^n_4)}=0.09753\textrm{ min}^{-1};\\
k_3&=w_3\frac{\bar{x}_3^n}{\bar{x}^n_2}= 1.31985\textrm{ min}^{-1};\\
k_4&=w_4\frac{\bar{x}^n_4}{\xi+f_3(\bar{x}^n_3\bar{x}_4)}=0.00092545\textrm{ min}^{-1}.
\end{align*}

For these values of the system parameters, the following equilibrium states exist:
\begin{align*}
& E^n=(7.659\textrm{ pg/ml}, 21 \textrm{ pg/ml}, 3.055 \textrm{ ng/ml},0.1) &  \textrm{normal state}\\
& E^d=(38.425\textrm{ pg/ml}, 10.04 \textrm{ pg/ml}, 1.4606 \textrm{ ng/ml},0.967) &  \textrm{diseased state}\\
& E^u=(8.3097\textrm{ pg/ml}, 20.495 \textrm{ pg/ml}, 2.981 \textrm{ ng/ml},0.16) &  \textrm{unstable state}
\end{align*}
The low level of cortisol in the case of the equilibrium state $E^d$ can be associated with hypocortisolism, and hence, $E^d$ is regarded as the "diseased" state. In the non-delayed case, the normal equilibrium state $E^n$ and the diseased equilibrium state $E^d$ are both asymptotically stable, as inequalities $(I_0)$, $(I_1)$ and $(I_2)$ are satisfied (see Theorem \ref{thm.stab}). On the hand, the equilibrium state $E^u$ is unstable, therefore, it is not significant from the biological point of view.  

It is important to emphasize that for both equilibria $E^n$ and $E^d$, inequality $\overline{(I_3)}$ is satisfied, which implies that when delays are introduced in the mathematical model, for sufficiently high average time delays bifurcations will occur, causing the loss of stability the $E^n$ and $E^d$.

As for the choice of mean time delays, firstly, as CRH travels from the hypothalamus to the pituitary through the hypophyseal portal blood vessels in an extremely short time \cite{Bairagi_2008}, we assume $\tau_1=0$. Moreover, the human inhibitory time course for the negative feedback of cortisol on the secretion of ACTH has been described as anything between 15 and 60 min \cite{Boscaro_1998,Posener_1997}, therefore we consider a mean delay $\tau_{32}\in(0,60]$. In our numerical simulations, we additionally assume that $\tau_{31}=\tau_{32}=\tau_{34}$. In \cite{Hermus_1984}, a 30-min delay has been given for the positive-feedforward effect of ACTH on plasma cortisol levels, therefore, we assume $\tau_2\in(0,30]$.

\subsection{Dirac kernels}

In the case of discrete time delays, choosing the bifurcation parameter $\tau=\tau_2+\tau_{32}$, we find the following critical values corresponding to Hopf bifurcations, based on Theorem \ref{thm.bif.dirac} and equation (\ref{eq.tau.Dirac}): $\tau_0^n=49.8505$ (min) for $E^n$ and $\tau_0^d=37.8362$ (min) for $E^d$, respectively. For $\tau<\tau_0^d$, both equilibria $E^n$ and $E^d$ are asymptotically stable. When $\tau $ crosses the critical value $\tau_0^d$, a Hopf bifurcation occurs in a neighborhood of the equilibrium $E^d$, which causes this equilibrium to become unstable and generates an asymptotically stable limit cycle in its neighborhood. The equilibrium state $E^n$ remains asymptotically stable whenever $\tau<\tau_0^n$. However, when the bifurcation parameter $\tau$ passes through the critical value $\tau_0^n$, a supercritical Hopf bifurcation takes place at $E^n$. Numerical simulations show that for $\tau>\tau_0^n$ two asymptotically stable limit cycles coexist, one corresponding to the normal ultradian rythm of the HPA axis and the other one reflecting a diseased hypocortisolic ultradian rythm. Considering $\tau=50$ (min), the coexisting limit cycles are presented in Figures \ref{fig:d1}, \ref{fig:d2} and \ref{fig:d3}.  

\begin{figure}[htbp]
\centering
\includegraphics[width=0.6\textwidth]{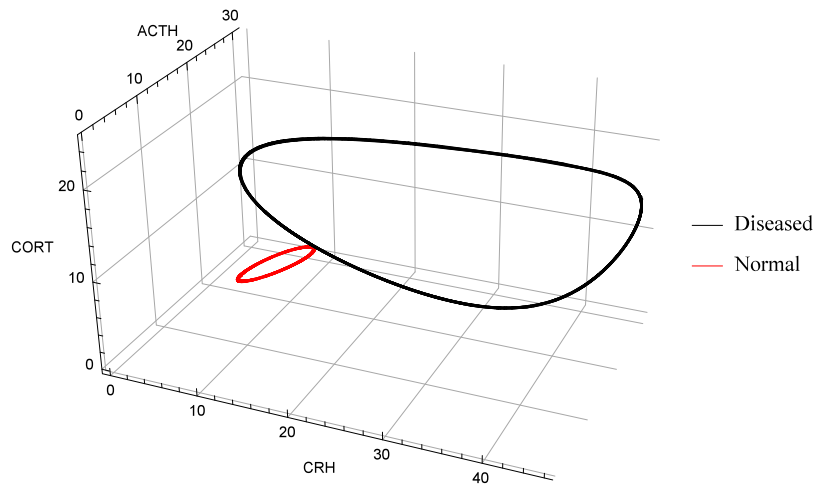} \caption{Two asymptotically stable limit cycles coexist in (\ref{sys.hpa.gr.dd}) with discrete delays: $\tau_1=0$, $\tau_2=30$ (min), $\tau_{31}=\tau_{32}=\tau_{34}=20$ (min).}
\label{fig:d1}
\end{figure}

\begin{figure}[htbp]
\centering
\includegraphics[width=0.7\textwidth]{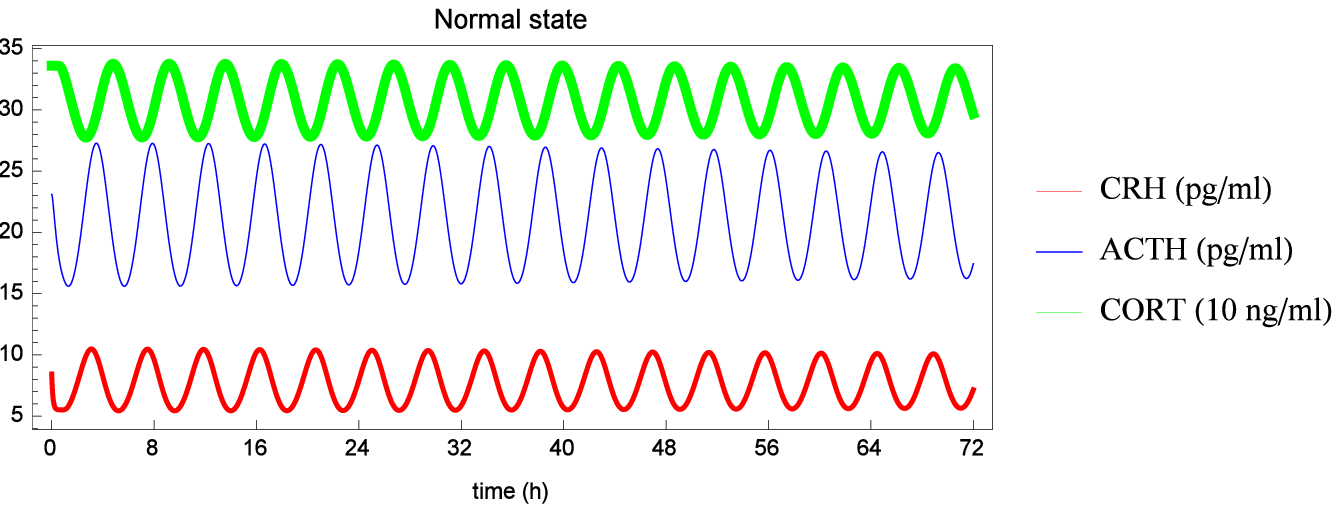} \caption{Evolution of the state variables of (\ref{sys.hpa.gr.dd}) with discrete delays: $\tau_1=0$, $\tau_2=30$ (min), $\tau_{31}=\tau_{32}=\tau_{34}=20$ (min) and an initial condition in a neighborhood of $E^n$.}
\label{fig:d2}
\end{figure}

\begin{figure}[htbp]
\centering
\includegraphics[width=0.7\textwidth]{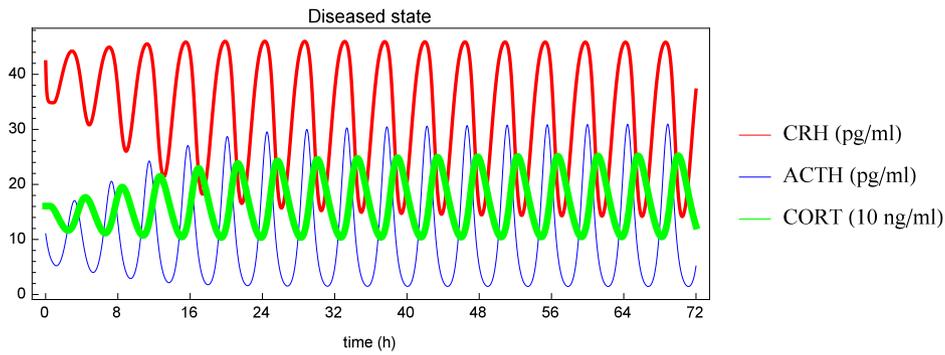} \caption{Evolution of the state variables of (\ref{sys.hpa.gr.dd}) with discrete delays: $\tau_1=0$, $\tau_2=30$ (min), $\tau_{31}=\tau_{32}=\tau_{34}=20$ (min) and an initial condition in a neighborhood of $E^d$.}
\label{fig:d3}
\end{figure}

\subsection{Strong Gamma kernels}

We now consider system  (\ref{sys.hpa.gr.dd}) with strong Gamma kernels with the same parameter $\theta$ and $p_2=p_{31}=p_{32}=p_{34}=2$ and $p_1=0$.  
Choosing the bifurcation parameter $\theta$, we find the following critical values corresponding to Hopf bifurcations, based on Theorem \ref{thm.bif.gamma} and equation (\ref{eq.beta.Gamma}): $\theta_4^d=12.625$ (min) for $E^d$ and $\theta_4^n=18.9$ (min) for $E^n$, respectively. As in the previous case, when $\theta$ passes one of the critical values $\theta_4^d$ or $\theta_4^n$, a supercritical Hopf bifurcation takes place in a neighborhood of the corresponding equilibrium $E^d$ or $E^n$. For $\theta>\theta_4^n$, numerical simulations show the coexistence of two asymptotically stable limit cycles, one corresponding to the normal ultradian rythm of the HPA axis and the other one reflecting a diseased hypocortisolic ultradian rythm. Considering $\theta=19$ (min) (i.e. a total average time delay $\tau=76$ (min)), the coexisting limit cycles are presented in Figures \ref{fig:g1}, \ref{fig:g2} and \ref{fig:g3}.  

\begin{figure}[htbp]
\centering
\includegraphics[width=0.6\textwidth]{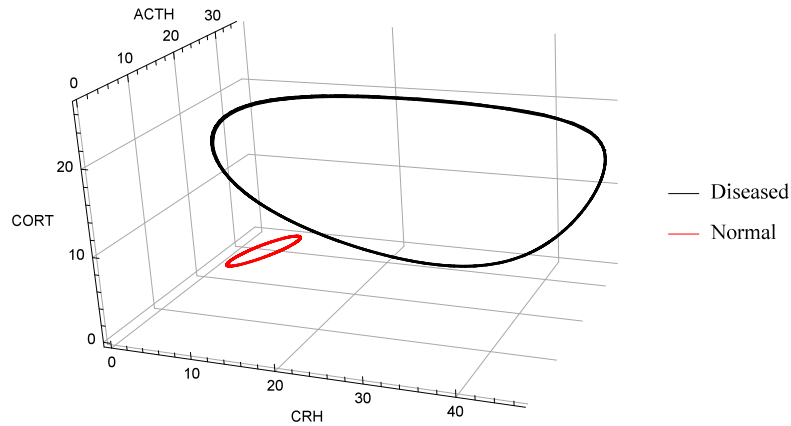} \caption{Two asymptotically stable limit cycles coexist in (\ref{sys.hpa.gr.dd}) with strong gamma kernels ($p_2=p_{31}=p_{32}=p_{34}=2$, $p_1=0$) with mean time delay $\theta=19$ (min).}
\label{fig:g1}
\end{figure}

\begin{figure}[htbp]
\centering
\includegraphics[width=0.7\textwidth]{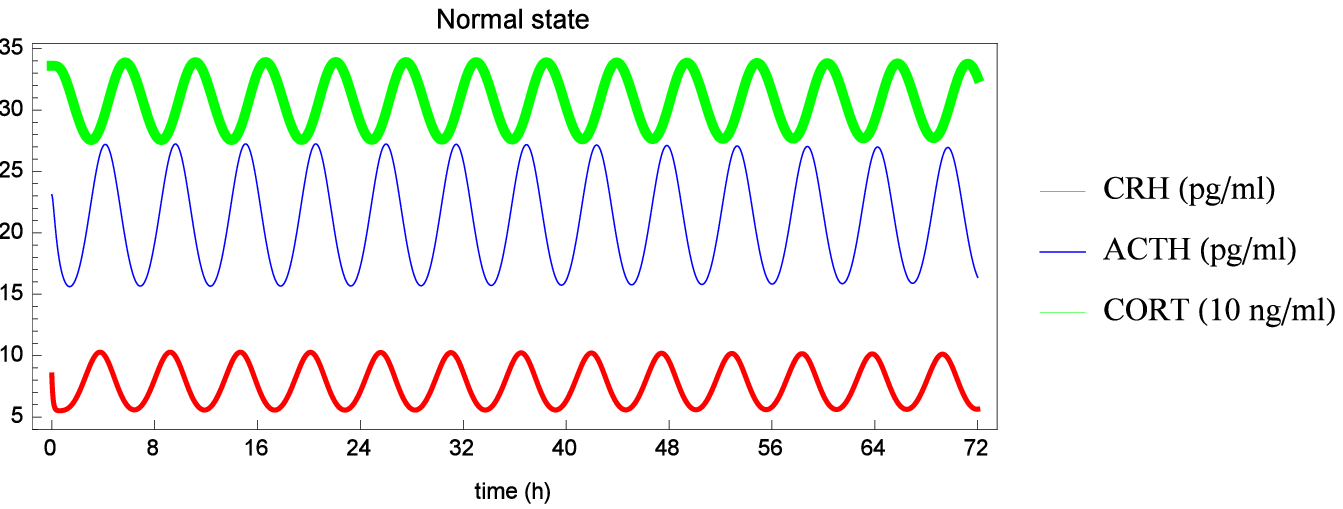} \caption{Evolution of the state variables of (\ref{sys.hpa.gr.dd}) with strong gamma kernels ($p_2=p_{31}=p_{32}=p_{34}=2$, $p_1=0$) with mean time delay $\theta=19$ (min) and an initial condition in a neighborhood of $E^n$.}
\label{fig:g2}
\end{figure}

\begin{figure}[htbp]
\centering
\includegraphics[width=0.7\textwidth]{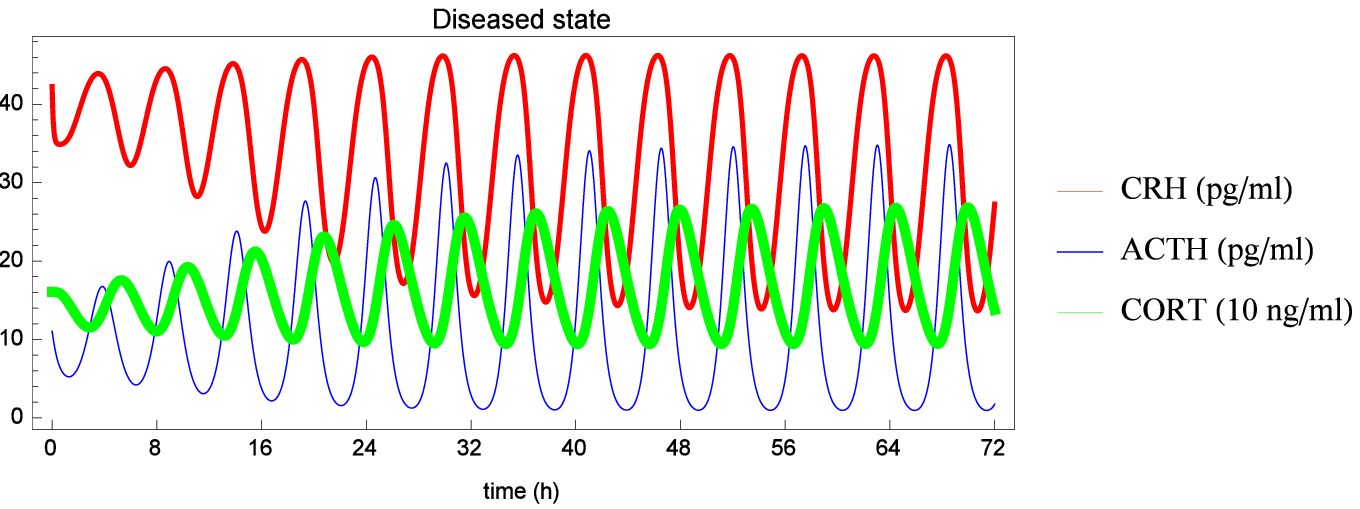} \caption{Evolution of the state variables of (\ref{sys.hpa.gr.dd}) with strong gamma kernels ($p_2=p_{31}=p_{32}=p_{34}=2$, $p_1=0$) with mean time delay $\theta=19$ (min) and an initial condition in a neighborhood of $E^d$.}
\label{fig:g3}
\end{figure}

\section{Conclusions}
This paper presents an analysis of a four-dimensional mathematical model describing the hypothalamus-pituitary-adrenal axis with the influence of the GR concentration, considering general feedback functions (which include as a special case Hill-type functions frequently used in the literature) to account for the interactions within the HPA axis.  Due to the fact that the involved processes are not instantaneous, general distributed delays have been included. This is a more realistic approach to the modeling of the biological processes, as it takes into account the whole past history of the variables, efficiently capturing the vital mechanisms of the HPA system. 

The positivity of the solutions and the existence of a positively invariant bounded region are proved. It is shown that the considered four-dimensional system has at least one equilibrium state and a detailed local stability and Hopf bifurcation analysis is given. Sufficient conditions expressed in terms of inequalities involving the system's parameters are found which guarantee the local asymptotic stability of an equilibrium. On the other hand, a necessary condition has also been obtained for the occurrence of bifurcations in a neighborhood of an equilibrium, when time delays are present. For the Hopf bifurcation analysis, two particular types of delays have been considered, given by Dirac and Gamma kernels, respectively.

Numerical simulations reflect the importance of the theoretical results. They exemplify the fact that an appropriate choice of the system's parameters leads to the coexistence of two asymptotically stable equilibria in the non-delayed case. When the total average time delay of the system passes through critical values which are computed according to the theoretical findings, the asymptotically stable equilibria loose their stability due to Hopf bifurcations and stable limit cycles are born in their neighborhoods. The coexistence of two stable limit cycles is revealed for a sufficiently large average time delay, which successfully model the ultradian rhythm of the HPA axis both in a normal disease-free situation and in a diseased hypocortisolim state, respectively. 

As a direction for future research, a fractional-order formulation of the mathematical model will be analyzed.

\bibliography{fractional_bib,hpa_bib}   

\begin{thebibliography}{10}

\bibitem{Conrad_2009}
Conrad Matthias, Hubold Christian, Fischer Bernd, Peters Achim. Modeling the
  hypothalamus--pituitary--adrenal system: homeostasis by interacting positive
  and negative feedback.  {\it Journal of Biological Physics.
  }2009;35(2):149--162.

\bibitem{kim2016onset}
Kim Lae~U., D'Orsogna Maria~R., Chou Tom. Onset, timing, and exposure therapy
  of stress disorders: mechanistic insight from a mathematical model of
  oscillating neuroendocrine dynamics.  {\it Biology Direct. }2016;11(1):13.

\bibitem{Jelic_2005}
Jeli{\'c} Smiljana, {\v{C}}upi{\'c} {\v{Z}}eljko, Kolar-Ani{\'c} Ljiljana.
  Mathematical modeling of the hypothalamic--pituitary--adrenal system
  activity.  {\it Mathematical Biosciences. }2005;197(2):173--187.

\bibitem{Lenbury_2005}
Lenbury Yongwimon, Pornsawad Pornsarp. A delay-differential equation model of
  the feedback-controlled hypothalamus--pituitary--adrenal axis in humans.
  {\it Mathematical Medicine and Biology. }2005;22(1):15--33.

\bibitem{Savic_2006}
Savi{\'c} Danka, Jeli{\'c} Smiljana, Buri{\'c} Nikola. Stability of a general
  delay differential model of the hypothalamo-pituitary-adrenocortical system.
  {\it International Journal of Bifurcation and Chaos. }2006;16(10):3079--3085.

\bibitem{Bairagi_2008}
Bairagi N., Chatterjee Samrat, Chattopadhyay J.. Variability in the secretion
  of corticotropin-releasing hormone, adrenocorticotropic hormone and cortisol
  and understandability of the hypothalamic-pituitary-adrenal axis dynamics—a
  mathematical study based on clinical evidence.  {\it Mathematical Medicine
  and Biology. }2008;:1--27.

\bibitem{Pornsawad_2013}
Pornsawad Pornsarp. The feedforward-feedback system of the
  hypothalamus-pituitary-adrenal axis.  In: :1374--1379IEEE; 2013.

\bibitem{bangsgaard2017}
Bangsgaard Elisabeth~O, Ottesen Johnny~T. Patient specific modeling of the HPA
  axis related to clinical diagnosis of depression.  {\it Mathematical
  Biosciences. }2017;287:24--35.

\bibitem{Vinther_2011}
Vinther Frank, Andersen Morten, Ottesen Johnny~T. The minimal model of the
  hypothalamic--pituitary--adrenal axis.  {\it Journal of Mathematical Biology.
  }2011;63(4):663--690.

\bibitem{Andersen_2013}
Andersen Morten, Vinther Frank, Ottesen Johnny~T. Mathematical modeling of the
  hypothalamic--pituitary--adrenal gland (HPA) axis, including hippocampal
  mechanisms.  {\it Mathematical Biosciences. }2013;246(1):122--138.

\bibitem{Carroll_2007}
Carroll B.J., Cassidy F., Naftolowitz D., et al. Pathophysiology of
  hypercortisolism in depression.  {\it Acta Psychiatrica Scandinavica.
  }2007;115(s433):90--103.

\bibitem{Kaslik_Neamtu_2016}
Kaslik Eva, Neamtu Mihaela. Stability and Hopf bifurcation analysis for the
  hypothalamic-pituitary-adrenal axis model with memory.  {\it Mathematical
  Medicine and Biology. }2017;.

\bibitem{Cushing_2013}
Cushing Jim~M.. {\it Integrodifferential equations and delay models in
  population dynamics}.
\newblock Springer Science \& Business Media; 2013.

\bibitem{Adimy_2006}
Adimy M., Crauste F., Halanay M., Opri{\c{s}} D.. Stability of limit cycles in
  a pluripotent stem cell dynamics model.  {\it Chaos, Solitons \& Fractals.
  }2006;27(4):1091--1107.

\bibitem{Faria_2008}
Faria Teresa, Oliveira Jos{\'e}~J.. Local and global stability for
  Lotka--Volterra systems with distributed delays and instantaneous negative
  feedbacks.  {\it Journal of Differential Equations. }2008;244(5):1049--1079.

\bibitem{song2017dd}
Song Haitao, Liu Shengqiang, Jiang Weihua. Global dynamics of a multistage SIR
  model with distributed delays and nonlinear incidence rate.  {\it
  Mathematical Methods in the Applied Sciences. }2017;40(6):2153--2164.

\bibitem{feng2017dd}
Feng Xiaomei, Wang Kai, Zhang Fengqin, Teng Zhidong. Threshold dynamics of a
  nonlinear multi-group epidemic model with two infinite distributed delays.
  {\it Mathematical Methods in the Applied Sciences. }2017;40(7):2762--2771.

\bibitem{Jessop_2010}
Jessop R., Campbell Sue~Ann. Approximating the stability region of a neural
  network with a general distribution of delays.  {\it Neural Networks.
  }2010;23(10):1187--1201.

\bibitem{du2013dd}
Du~Yanke, Xu~Rui, Liu Qiming. Stability and bifurcation analysis for a neural
  network model with discrete and distributed delays.  {\it Mathematical
  Methods in the Applied Sciences. }2013;36(1):49--59.

\bibitem{Gupta_2007}
Gupta Shakti, Aslakson Eric, Gurbaxani Brian~M, Vernon Suzanne~D. Inclusion of
  the glucocorticoid receptor in a hypothalamic pituitary adrenal axis model
  reveals bistability.  {\it Theoretical Biology and Medical Modelling.
  }2007;4(1):8.

\bibitem{benzvi2009}
Ben-Zvi Amos, Vernon Suzanne~D, Broderick Gordon. Model-based therapeutic
  correction of hypothalamic-pituitary-adrenal axis dysfunction.  {\it PLoS
  Computational Biology. }2009;5(1):e1000273.

\bibitem{Sriram_2012}
Sriram K, Rodriguez-Fernandez Maria, Doyle~III Francis~J. Modeling cortisol
  dynamics in the neuro-endocrine axis distinguishes normal, depression, and
  post-traumatic stress disorder (PTSD) in humans.  {\it PLoS Comput Biol.
  }2012;8(2):e1002379.

\bibitem{zarzer2013}
Zarzer Clemens~A, Puchinger Martin~G, K{\"o}hler Gottfried, K{\"u}gler Philipp.
  Differentiation between genomic and non-genomic feedback controls yields an
  HPA axis model featuring Hypercortisolism as an irreversible bistable switch.
   {\it Theoretical Biology and Medical Modelling. }2013;10(1):65.

\bibitem{CMMSE2017}
Kaslik Eva, Neamtu Mihaela. Dynamics of a Four-Dimensional
  Hypothalamic-Pituitary-Adrenal Axis Model with Distributed Delays.  {\it
  Proceedings of the 16th International Conference on Computational and
  Mathematical Methods in Science and Engineering, CMMSE 2017, Cadiz, Spain.
  }2017;.

\bibitem{Landsberg_1992}
Landsberg L., Young J.B., Wilson J.D., Foster D.W.. {\it Williams Textbook of
  Endocrinology}.
\newblock Prentice Hall International, New Jersey; 1992.

\bibitem{Campbell_2009}
Campbell S.A., Jessop R.. Approximating the stability region for a differential
  equation with a distributed delay.  {\it Mathematical Modelling of Natural
  Phenomena. }2009;4(02):1--27.

\bibitem{Yuan_2011}
Yuan Yuan, B{\'e}lair Jacques. Stability and Hopf bifurcation analysis for
  functional differential equation with distributed delay.  {\it SIAM Journal
  on Applied Dynamical Systems. }2011;10(2):551--581.

\bibitem{Boscaro_1998}
Boscaro Marco, Paoletta Agostino, Scarpa Elena, et al. Age-Related Changes in
  Glucocorticoid Fast Feedback Inhibition of Adrenocorticotropin in Man 1.
  {\it The Journal of Clinical Endocrinology \& Metabolism.
  }1998;83(4):1380--1383.

\bibitem{Posener_1997}
Posener JA, Schildkraut JJ, Wilfams GH, Schatzberg AF. Cortisol feedback
  effects on plasma corticotropin levels in healthy subjects.  {\it
  Psychoneuroendocrinology. }1997;22(3):169--176.

\bibitem{Hermus_1984}
Hermus ARMM, Pieters GFFM, Smals AGH, Benraad Th~J, Kloppenborg PWC. Plasma
  adrenocorticotropin, cortisol, and aldosterone responses to
  corticotropin-releasing factor: modulatory effect of basal cortisol levels.
  {\it The Journal of Clinical Endocrinology \& Metabolism.
  }1984;58(1):187--191.

\end{thebibliography}

\end{document}